\theoremstyle{plain}
\newtheorem{theorem}{Theorem}%[section]
\newtheorem{proposition}[theorem]{Proposition}
\newtheorem{lemma}[theorem]{Lemma}
\theoremstyle{definition}
\newtheorem{remark}[theorem]{Remark}
\title{Nil Clean Involutions}
\author{Janez \v{S}ter\\Faculty of Mechanical Engineering\\University of Ljubljana\\\scriptsize{janez.ster@fs.uni-lj.si}}
\date{December 7, 2015}
\begin{document}

\maketitle

\begin{abstract}
We prove that if an involution in a ring is the sum of an idempotent
and a nilpotent then the idempotent in this decomposition must be $1$.
As a consequence, we completely characterize weakly nil-clean rings
introduced recently in
[Breaz, Danchev and Zhou, Rings in which every element is either a sum or
a difference of a nilpotent and an idempotent, J.~Algebra Appl.,
DOI: 10.1142/S0219498816501486].
\end{abstract}

In this note rings are unital.
$U(R)$, $\operatorname{Id}(R)$, $\operatorname{Nil}(R)$
and $\operatorname{Nil}^*(R)$ stand for the set of units, the set of idempotents,
the set of nilpotents and the upper nilradical of a ring $R$, respectively.
$\mathbb{Z}_n$ stands for the set of integers modulo $n$.
An \textit{involution} in a ring means an element $a$ satisfying $a^2=1$.

Following \cite{diesl}, we say that an element in a ring is \textit{nil clean}
if it is the sum of an idempotent and a nilpotent,
and a ring is nil clean if every element is nil clean.
The main result in this note is the following:

\begin{proposition}
\label{glavna}
Let $R$ be a ring with an involution $a\in R$. If $a$ is the sum of an idempotent
$e$ and a nilpotent $q$ then $e=1$. In particular, every nil clean involution
in a ring is unipotent (i.e.~$1$ plus a nilpotent).
\end{proposition}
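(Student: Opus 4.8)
The plan is to set $b = 1 - e$ and to prove $b = 0$. Since $b$ is an idempotent, it suffices to show that $b$ is nilpotent, because a nilpotent idempotent must vanish. To obtain a workable relation I would expand the hypothesis $a^2 = 1$: writing $a = e + q$ gives $1 = e + eq + qe + q^2$, hence $b = 1 - e = eq + qe + q^2$. The decisive move is then to substitute $e = 1 - b$, which converts this into an equation involving only $b$ and $q$, namely
\[
b = 2q + q^2 - qb - bq ,
\]
that I will treat as an equation to be \emph{solved} for $b$.

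First I would stress where the real difficulty lies: every identity that follows from $a^2 = 1$ and $e^2 = e$ alone holds automatically once we put $q = a - e$, and no such identity can force $e = 1$ (dropping the nilpotency of $q$, e.g.\ taking $a = 1$ with an arbitrary idempotent $e$ and $q = 1 - e$, already gives $e \neq 1$). Thus the nilpotency $q^n = 0$ \emph{must} be the crucial ingredient, and the point of the displayed relation is precisely to let it enter. The idea is to read the relation as $(I - L)(b) = 2q + q^2$, where $L$ is the additive operator $L(x) = -(qx + xq)$ on $R$. Since left and right multiplication by $q$ are commuting nilpotent operators, $L$ is itself nilpotent: the binomial expansion of $L^{2n-1}$ is a sum of terms $q^{j} x\, q^{2n-1-j}$, and for every $j$ either $j \ge n$ or $2n-1-j \ge n$, so each term is annihilated by $q^n = 0$.

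Consequently $I - L$ is invertible, with inverse the finite sum $\sum_{i \ge 0} L^{i}$, and I obtain
\[
b = \sum_{i=0}^{2n-2} L^{i}(2q + q^2) .
\]
Every summand on the right is a polynomial in $q$ with no constant term, so $b$ lies in the commutative subring generated by $q$ and has zero constant term; hence $b$ is nilpotent. As $b$ is also idempotent, $b = 0$, that is, $e = 1$. The ``in particular'' clause then follows at once, since $a = e + q = 1 + q$ with $q$ nilpotent. The step I expect to be the main obstacle is conceptual rather than computational: realizing that the only way the nilpotency of $q$ can be brought to bear is by solving the relation for $b$ through the nilpotent operator $L$. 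Once this reformulation is in hand, the nilpotency of $L$ (the binomial/sandwich estimate) and the nilpotent-idempotent conclusion are routine.
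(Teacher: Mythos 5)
Your proof is correct, and it takes a genuinely different route from the paper's. The paper argues directly with ring elements: setting $f=1-e$, it verifies by a short computation that $r=q(1+q)$ satisfies $fr=rf=faf+f$, so the nilpotent $r$ commutes with $f$, $e$, $q$ and $a$, and then rewrites $f=fa^2=fqa=f(1+q)^{-1}a\cdot r$, which exhibits the idempotent $f$ as a nilpotent (powers of the right-hand side accumulate powers of the commuting $r$), forcing $f=0$. You instead extract from $a^2=1$ and $e^2=e$ the single relation $b+qb+bq=2q+q^2$ for $b=1-e$ and \emph{solve} it: your sandwich estimate is right ($L^{2n-1}(x)$ is an integer combination of terms $q^j x q^{2n-1-j}$ with $j\ge n$ or $2n-1-j\ge n$), so $I-L$ is invertible via the finite geometric series and $b$ is its unique solution; since $2q+q^2$ commutes with $q$, one even gets the closed form $b=\sum_{i=0}^{2n-2}(-2)^i q^i(2q+q^2)=(2q+q^2)(1+2q)^{-1}$, manifestly nilpotent, and a nilpotent idempotent vanishes. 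What your approach buys: it makes transparent exactly where the nilpotency of $q$ enters (invertibility of the unipotent operator $I-L$ on the additive group of $R$), it is mechanical rather than requiring the paper's inspired choice of the auxiliary element $r=q(1+q)$, and it shows en route the stronger fact that $1-e$ lies in the subring generated by $q$, so $e$ would have to commute with $q$ in any such decomposition. What the paper's approach buys: it is shorter, stays entirely at the level of ring elements, and --- as its concluding Remark shows --- the same element-level computation adapts to arbitrary degree-$2$ algebraic elements $\alpha a^2+\beta a+\gamma=0$ over a commutative base; in that setting your linear equation becomes $(\alpha+\beta)e+\alpha(qe+eq)=-(\alpha q^2+\beta q+\gamma)$, whose operator is invertible only when $\alpha+\beta$ acts invertibly, which is precisely why the general conclusion there is the weaker statement that $(\alpha+\beta)^n e+(\alpha+\beta)^{n-1}\gamma$ is nilpotent.
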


\begin{proof}
Write $a=e+q$ with $e\in\operatorname{Id}(R)$ and $q\in\operatorname{Nil}(R)$,
and denote $f=1-e\in\operatorname{Id}(R)$ and $r=q(1+q)\in\operatorname{Nil}(R)$.
From $fq=f(a-e)=fa$ we compute $fr=fq(1+q)=fa(1+a-e)=fa(f+a)=faf+fa^2=faf+f$,
and similarly $rf=faf+f$. Hence $fr=rf$, so that $r$ is a nilpotent which commutes
with $f$, $e$, $q$ and $a$. Accordingly,
$$f=fa^2=fqa=fr(1+q)^{-1}a=f(1+q)^{-1}a\cdot r$$
is a nilpotent and hence $f=0$, as desired.
\end{proof}

Following \cite{breazdanchevzhou}, we say that a ring is \textit{weakly nil-clean}
if every element is either a sum or a difference of a nilpotent and an idempotent.

\begin{lemma}
\label{gllema}
If $R$ is a weakly nil-clean ring with $2\in U(R)$ then
$R/\operatorname{Nil}^*(R)\cong\mathbb{Z}_3$.
\end{lemma}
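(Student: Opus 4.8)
The plan is to reduce to the case $\Nil^*(R)=0$ and then prove $R\cong\mathbb{Z}_3$ outright. This reduction is legitimate: a homomorphic image of a weakly nil-clean ring is again weakly nil-clean (idempotents map to idempotents, nilpotents to nilpotents), the property $2\in U(R)$ passes to quotients, and $\Nil^*\bigl(R/\Nil^*(R)\bigr)=0$ by the usual fact that an extension of a nil ideal by a nil ideal is nil; also $R/\Nil^*(R)\neq 0$ since $1$ is not nilpotent. So I assume henceforth that $R$ is a nonzero weakly nil-clean ring with $2\in U(R)$ and $\Nil^*(R)=0$.

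The key step is to show that $0$ and $1$ are the only idempotents of $R$. For $e\in\Id(R)$ the element $2e-1$ is an involution, since $(2e-1)^2=4e-4e+1=1$; so is its negative $1-2e=2(1-e)-1$. As $R$ is weakly nil-clean, one of these two involutions is nil clean, and then Proposition~\ref{glavna} forces the idempotent in its decomposition to be $1$. In the first case $2e-1=1+q$ with $q$ nilpotent, so $e-1$ is nilpotent (here $2\in U(R)$ is used), and since $(e-1)^2=1-e$ is idempotent it must vanish, giving $e=1$; the second case is symmetric and gives $e=0$. Writing $N:=\Nil(R)$, it follows that every element of $R$, being nil clean or the negative of a nil clean element, lies in $N\cup(1+N)\cup(-1+N)$.

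The main work, and the step I expect to be the chief obstacle, is to prove $N=0$; because $\Nil^*(R)=0$ it is enough to show that $N$ is a two-sided ideal, for it is then a nil ideal. First note that the non-units of $R$ are exactly the elements of $N$: every nilpotent is a non-unit, and conversely the structural statement above shows any non-unit must lie in $N$, the elements of $\pm1+N$ being units. The crux is then two closure properties. For absorption I will use that a nonzero nilpotent has no one-sided inverse --- if $qs=1$ with $q^n=0$ then $q^{n-1}=q^{n-1}(qs)=q^ns=0$, and iterating forces $q=0$ --- so for $q\in N$ and any $r\in R$ a unit value of $rq$ or $qr$ would make $q$ one-sidedly invertible; hence $rq,qr$ are non-units, i.e.\ lie in $N$. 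For additive closure, if $q_1+q_2$ were a unit $u$ then $1=u^{-1}q_1+u^{-1}q_2$ would exhibit $1$ as a sum of two non-units (each $u^{-1}q_i$ is a non-unit, else $q_i$ would be a unit), hence of two nilpotents; but then $u^{-1}q_2=1-u^{-1}q_1$ would be at once nilpotent and a unit, which is absurd. Thus $N$ is a nil two-sided ideal and $N\subseteq\Nil^*(R)=0$.

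With $N=0$ the structural statement collapses to $R=\{0,1,-1\}$, and these elements are distinct because $2\in U(R)$ gives $1\neq-1$. Closure of addition now determines everything: $1+1$ lies in $\{0,1,-1\}$ and can be neither $0$ (as $2$ is a unit) nor $1$ (as $1\neq 0$), so $1+1=-1$, i.e.\ $3=0$. Hence $R$ is the three-element ring $\{0,1,-1\}$ with $3=0$, which is $\mathbb{Z}_3$, as required.
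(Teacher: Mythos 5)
Your proof is correct and takes essentially the same route as the paper: apply Proposition~\ref{glavna} to the involutions $\pm(2e-1)$ to get trivial idempotents (using $2\in U(R)$), deduce that every element lies in $\Nil(R)\cup(1+\Nil(R))\cup(-1+\Nil(R))$, and conclude that $\Nil(R)$ is an ideal. The only differences are presentational: you reduce to $\Nil^*(R)=0$ first, and you spell out in full (via non-units $=\Nil(R)$ and the absence of one-sided inverses for nonzero nilpotents) the ideal-closure verification that the paper compresses into ``one quickly obtains.''
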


\begin{proof}
Choose any idempotent $e\in\operatorname{Id}(R)$, and set $a=1-2e$.
By assumption, either $a$ or $-a$ is nil clean. If $a$ is nil clean then, since $a^2=1$,
Proposition \ref{glavna} gives that $a-1=-2e$ is a nilpotent, so that $e$ is a nilpotent
and hence $e=0$. Similarly, if $-a$ is nil clean then, since $(-a)^2=1$,
Proposition \ref{glavna} gives that $-a-1=-2(1-e)$ is a nilpotent, so that
$1-e$ is a nilpotent and hence $e=1$. This proves that $R$ has only trivial idempotents.
Accordingly, since $R$ is weakly nil clean, every element of $R$ must be either
$q$ or $1+q$ or $-1+q$ for some $q\in\operatorname{Nil}(R)$.
From this, one quickly obtains that $\operatorname{Nil}(R)$ must actually form
an ideal in $R$, so that $R/\operatorname{Nil}^*(R)$ can have only $3$ elements
and hence $R/\operatorname{Nil}^*(R)\cong\mathbb{Z}_3$, as desired.
(Alternatively, considering that $R$ is abelian,
$R/\operatorname{Nil}^*(R)\cong\mathbb{Z}_3$ can be also obtained
from \cite[Theorem 12]{breazdanchevzhou}.)
\end{proof}

Using the above lemma, we have:

\begin{theorem}
A ring is weakly nil clean if and only if it is either nil clean or isomorphic to
$R_1\times R_2$ where $R_1$ is nil clean and
$R_2/\operatorname{Nil}^*(R_2)\cong\mathbb{Z}_3$.
\end{theorem}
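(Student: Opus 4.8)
The plan is to prove both implications, organizing everything around the single invariant ``is $2$ nilpotent?'', which I claim detects exactly when a weakly nil clean ring is nil clean. The forward implication is the substantial one.

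For the easy direction, if $R$ is nil clean it is trivially weakly nil clean, so the work is in the product case $R\cong R_1\times R_2$. The one subtlety is that the sum-versus-difference choice cannot be made coordinatewise, so I would first record two preparatory facts. Since $R_2/\operatorname{Nil}^*(R_2)\cong\mathbb{Z}_3$ with $\operatorname{Nil}^*(R_2)$ nil, every element of $R_2$ has the form $n$, $1+n$ or $-1+n$ with $n\in\operatorname{Nil}(R_2)$. And since $R_1$ is nil clean, every $x\in R_1$ is not only a sum $e+q$ but also of the form $-f+s$ with $f$ idempotent and $s$ nilpotent: applying nil cleanness to $-x$ gives $-x=f+(-s)$, hence $x=-f+s$. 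Given $(x_1,x_2)$, I match the sign dictated by $x_2$: if $x_2\in\{n,1+n\}$ I write both coordinates as idempotent-plus-nilpotent, and if $x_2=-1+n$ I write both coordinates in the $-f+s$ form. Either way $(x_1,x_2)$ is a sum or a difference of an idempotent and a nilpotent, so $R$ is weakly nil clean.

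For the forward direction I would first isolate the clean sub-statement that a weakly nil clean ring in which $2$ is nilpotent is in fact nil clean. Here $2R$ is a nil two-sided ideal (as $2$ is central), and modulo it $-\bar e=\bar e$ for every idempotent $e$ because $2\bar e=0$; hence in $R/2R$ the sum/difference alternative collapses and $R/2R$ is nil clean. Since idempotents lift modulo the nil ideal $2R$ and preimages of nilpotents remain nilpotent, $R$ itself is nil clean. Combined with Proposition \ref{glavna} applied to the involution $-1$ (from $-1=e+q$ one gets $e=1$, so $2$ is nilpotent), this gives the dichotomy: $R$ is nil clean if and only if $2$ is nilpotent.

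It remains to treat the case where $2$ is not nilpotent, where I aim to split off a $\mathbb{Z}_3$-part. The engine is that $6$ is nilpotent in every weakly nil clean ring; granting this, $2^t3^t=0$ for some $t$, and B\'ezout gives $m,n\in\mathbb{Z}$ with $m2^t+n3^t=1$, so that $e=m2^t$ is a central idempotent with $3$ nilpotent in $eR$ and $2$ nilpotent in $(1-e)R$. Then $(1-e)R$ is nil clean by the sub-statement above, while in $eR$ the relation $2=-1+3$ makes $2$ a unit, so Lemma \ref{gllema} yields $eR/\operatorname{Nil}^*(eR)\cong\mathbb{Z}_3$; this exhibits $R\cong R_1\times R_2$ as required (with the pure $\mathbb{Z}_3$-type ring recovered as the case $e=1$, i.e.\ $R_1=0$). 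The main obstacle is precisely the claim that $6$ is nilpotent, equivalently $6\in\operatorname{Nil}^*(R)$. I would attack it in $\bar R=R/\operatorname{Nil}^*(R)$, where every central nilpotent vanishes, by feeding the involutions $1-2\bar e$ into Proposition \ref{glavna} to force, for each idempotent $\bar e$, that $2\bar e$ or $2(1-\bar e)$ is nilpotent, and then using the $\mathbb{Z}_3$-rigidity of Lemma \ref{gllema} on the $2$-invertible constituents to conclude $\bar 6=0$. Coordinating the $2$-nilpotent and $2$-invertible behaviour to pin down $\bar 6$ is the crux, and the step I expect to require the most care.
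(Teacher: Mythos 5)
Your overall architecture is sound, and most of the steps check out: the converse direction with the sign coordination across the product (using that a nil clean ring also writes every element as $-f+s$), the sub-statement that a weakly nil clean ring with $2$ nilpotent is nil clean (lifting through the nil ideal $2R$), the dichotomy via Proposition \ref{glavna} applied to $-1$, and the B\'ezout idempotent $e=m2^t$ feeding Lemma \ref{gllema} on $eR$. But there is a genuine gap at exactly the point you flag: you never prove that $6$ is nilpotent in a weakly nil clean ring, and the entire case ``$2$ not nilpotent'' stands on that claim. The attack you sketch does not close it. In $\bar R=R/\operatorname{Nil}^*(R)$, Proposition \ref{glavna} applied to $1-2\bar e$ only tells you that $2\bar e$ or $2(1-\bar e)$ is nilpotent, and since these elements are not central (unless $\bar e$ is), they need not vanish in $\bar R$; worse, invoking ``the $\mathbb{Z}_3$-rigidity of Lemma \ref{gllema} on the $2$-invertible constituents'' is circular, because the splitting of $\bar R$ into a $2$-nilpotent and a $2$-invertible constituent is precisely what the nilpotency of $6$ is needed to produce. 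So as written, your forward implication is established only when $2$ is nilpotent.

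The gap is real but reparable, and the repair is short: apply weak nil cleanness to the single element $2$. If $2=e+q$, then $q=2-e$ commutes with $e$ and decomposes as $q=2(1-e)+e$ with the two summands orthogonal, so $0=q^N=2^N(1-e)+e$ for some $N$; multiplying by $e$ gives $e=0$, whence $2=q$ is nilpotent. If $2=-e+q$, then $q=2+e=2(1-e)+3e$, so $0=q^N=2^N(1-e)+3^Ne$; multiplying by $1-e$ and by $e$ gives $2^N(1-e)=0=3^Ne$, hence $6^N=3^N\cdot 2^N(1-e)+2^N\cdot 3^Ne=0$. With this lemma inserted, your argument becomes complete and self-contained. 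Note that it then genuinely diverges from the paper's proof, which is a one-line deduction: the paper quotes \cite[Theorem 5]{breazdanchevzhou} --- the splitting $R\cong R_1\times R_2$ with $R_1$ nil clean and $2\in U(R_2)$, whose proof contains exactly the $6$-nilpotency fact you are missing --- and combines it with Lemma \ref{gllema}. Your route, once patched, reproves that cited splitting from scratch; this is more work, but it makes the theorem independent of the structure theory of \cite{breazdanchevzhou}.
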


\begin{proof}
Follows from Lemma \ref{gllema} together with \cite[Theorem 5]{breazdanchevzhou}.
\end{proof}

\begin{remark}
Proposition \ref{glavna} can be generalized to arbitrary algebraic elements of order
$2$ as follows. Let $R$ be an algebra over a commutative
ring $k$, and let $a\in R$ be an element satisfying $\alpha a^2+\beta a+\gamma=0$,
with $\alpha,\beta,\gamma\in k$, and suppose that
$a=e+q$ with $e\in\operatorname{Id}(R)$ and $q^n=0$. Then one can show that
$r=q(\alpha q+\alpha+\beta)$ is a nilpotent commuting with $e$, which yields,
similarly as in Proposition \ref{glavna}, that
$$(\alpha+\beta)^ne+(\alpha+\beta)^{n-1}\gamma$$
is also a nilpotent. Note that this result indeed generalizes Proposition \ref{glavna}
(taking $\alpha=1$, $\beta=0$ and $\gamma=-1$ yields
that $e-1$ is a nilpotent, so that $e=1$).
However, for orders of algebraicity higher than $2$ this argument no longer seems
to work.
\end{remark}

\subsection*{Acknowledgements}

The author is indebted to Professor T.Y.~Lam for a helpful discussion on the previous
version of this work.

\bibliographystyle{abbrv}
\bibliography{References}

\end{document}